\documentclass[10pt,a4paper]{article}
\usepackage[margin=1.1cm]{geometry}
\usepackage{amsmath,amsthm,amsfonts,amssymb,amscd,cite,graphicx}

\usepackage{titlesec}
\titleformat{\section}
{\normalfont\fontsize{12}{15}\bfseries}{\thesection}{1em.}{}

\usepackage[labelfont=bf]{caption}
\counterwithin{figure}{section}
\counterwithin{table}{section}

\usepackage{float}

\usepackage{tikz}
\usetikzlibrary{shapes.geometric, positioning, calc}

\newcommand{\Z}{{\mathbb Z}}

\newcommand{\ICG}{\mathrm{ICG}}

\newtheorem{lemma}{Lemma}[section]

\newtheorem{theorem}{Theorem}[section]

\allowdisplaybreaks[4]

\let\oldbibliography\thebibliography
\renewcommand{\thebibliography}[1]{%
  \oldbibliography{#1}%
  \setlength{\itemsep}{-2pt}%
}

\begin{document}

\baselineskip=0.20in

\makebox[\textwidth]{%
\hglue-15pt
\begin{minipage}{0.6cm}	
\vskip9pt
\end{minipage} \vspace{-\parskip}
\begin{minipage}[t]{6cm}
\end{minipage}
\hfill
\begin{minipage}[t]{6.5cm}
\end{minipage}}
\vskip36pt

\noindent
{\large \bf On uniquely colorable Cayley graphs}\\

\vspace{0.5cm}

\noindent
Milan Ba\v{s}i\'{c}$^{1,}\footnote{Corresponding author (milan.basic@pmf.edu.rs, basic\_milan@yahoo.com)}$\\

\noindent
\footnotesize $^1${\it Department of Computer Science, Faculty of Sciences and Mathematics, University of Ni\v{s}, Vi\v{s}egradska 33, 18000 Ni\v{s}, Serbia} \\

\noindent
 (\footnotesize Received: Day Month 202X. Received in revised form: Day Month 202X. Accepted: Day Month 202X. Published online: Day Month 202X.)\\

\setcounter{page}{1} \thispagestyle{empty}

\baselineskip=0.20in

\normalsize

 \begin{abstract}
 \noindent
We resolve two open problems regarding uniquely colorable Cayley graphs posed by Klotz and Sander (2017). First, we construct an infinite family of uniquely 3-colorable integral circulant graphs with a clique number of 2. This provides a negative answer to Problem 3.6, which asks whether every uniquely colorable circulant graph satisfies $\chi(G) = \omega(G)$. Because verifying unique colorability inherently relies on the exact independence number, we demonstrate that traditional spectral bounds fail to tightly capture this parameter, necessitating a rigorous combinatorial proof based on exact structural isomorphisms. Second, we establish a general algebraic construction proving the existence of uniquely colorable Cayley graphs over nonabelian groups whose color classes are left cosets of strictly distinct subgroups. By utilizing right-coset partitions of non-normal subgroups, this result provides a definitive affirmative answer to Problem 2.4.
\\[2mm]
 \noindent
{\bf Keywords:} uniquely colorable graphs; Cayley graphs; integral circulant graphs; chromatic number; clique number\\[2mm]
{\bf 2020 Mathematics Subject Classification:} 05C15, 05C25, 05C69
\end{abstract}

\baselineskip=0.20in

\section{Introduction}


%
%
%

%

Integral circulant graphs and their broader algebraic generalizations, Cayley graphs, form a prominent class of highly symmetric interconnection networks. Their topological regularity makes them exceptionally viable models for quantum spin networks, facilitating phenomena such as perfect state transfer, uniform mixing, and quantum walks. This applicability spans from classical continuous-time models \cite{ahmadi03, Ba14, Godsil17} to recent discrete-time Grover walks on both undirected and directed Cayley topologies over diverse algebraic structures \cite{Bhakta-2025, Bhakta-2026, Kubota-2025}. Beyond these applied physics models, there is a continuous mathematical effort to resolve the fundamental combinatorial and spectral properties of integral circulant graphs. Recent literature has seen significant structural advancements by various authors, briefly encompassing investigations into graph isospectrality \cite{Moenius-2023}, geometric kernels \cite{Sander2021}, families of graphs with few distinct eigenvalues \cite{Basic-LAA-2025, SanderSander2023}, as well as the exact determination of maximal diameters and spectral bounds \cite{BaIlSt23, Basic-2024, Basic-2025}.

A proper $k$-coloring of a graph $G$ is a partition of its vertex set into $k$ independent sets, called color classes. A graph is \textit{uniquely $k$-colorable} if every optimal coloring induces the exact same partition of vertices. Identifying uniquely colorable graphs and determining their exact chromatic number is a notoriously difficult task. Historically, the study of these parameters in Cayley networks was initiated by Klotz and Sander \cite{klotz07}. While the clique number was completely characterized for integral circulant graphs with few divisors \cite{Basic-2009}, determining the exact chromatic number has proven far more elusive. For instance, in \cite{Basic-2010}, the chromatic number was exactly determined for several classes of integral circulant graphs with two divisors; yet, for one specific class, only an upper bound was established alongside an open conjecture that equality holds. This persisting challenge perfectly illustrates the inherent mathematical complexity of exact vertex coloring in such highly symmetric networks.

The foundational result that unitary Cayley graphs are uniquely colorable was established by Ili\'{c} and Ba\v{s}i\'{c} \cite{Basic-2010}. Motivated by this discovery, Klotz and Sander's subsequent 2017 study \cite{Klotz-2017} investigated the strict algebraic and structural constraints imposed by unique colorability across broader graph classes, leading them to pose several open conjectures. This paper systematically resolves two of their most prominent open questions, one operating in the abelian (circulant) domain, and the other in the nonabelian domain.

The first question, posed as Problem 3.6 in \cite{Klotz-2017}, asks whether every uniquely colorable circulant graph must satisfy $\chi(G) = \omega(G)$. The intuition behind this conjecture relies on the fact that highly symmetric uniquely colorable graphs often attain the tightest possible bounds for their independence number, naturally forcing the existence of large cliques. In Section 3, we provide a negative answer to this problem by constructing an infinite family of uniquely $3$-colorable integral circulant graphs with a clique number of $2$. Because verifying unique colorability inherently relies on determining the exact independence number, we demonstrate that standard spectral techniques, such as the Hoffman ratio bound, fail to tightly capture this parameter. This explicitly necessitates a novel combinatorial approach based on structural isomorphisms to confirm the strict inequality between the chromatic and clique numbers.

The second question, posed as Problem 2.4 in \cite{Klotz-2017}, explores the algebraic generation of color classes. The authors proved that the color classes of any uniquely colorable Cayley graph are left cosets of conjugate subgroups. They questioned whether there exists a Cayley graph over a nonabelian group where these generating subgroups are strictly distinct. In Section 4, we provide a definitive affirmative answer to this problem. We introduce a general structural theorem utilizing right-coset partitions of non-normal subgroups to bypass the conjugacy limitations, validating the conjecture for a broad class of nonabelian groups.

\section{Preliminaries}
\label{sec:1}

A {\it circulant graph} $G(n;S)$ is a graph on vertices $\Z_n=\{0,1,\ldots,n-1\}$ where vertices $i$ and $j$ are adjacent if and only if $i-j \equiv s \pmod n$ for some $s \in S$. The set $S$ is called the {\it symbol} of $G(n;S)$. We consider only undirected graphs without loops, assuming $S=n-S=\{n-s\ |\ s\in S\}$ and $0\not\in S$. The degree of $G(n;S)$ is exactly $|S|$.

The eigenvalues and corresponding eigenvectors of $G(n;S)$ are given by:
\begin{equation*} \label{eq:eigenvalues unwigted} 
\lambda_j=\sum_{s \in S} \omega^{js}_n, \quad v_j=[1 \ \omega_n^j \ \omega_n^{2j} \cdots \omega_n^{(n-1)j}]^T, \quad 0\leq j\leq n-1,
\end{equation*}
where $\omega_n=e^{i\frac{2\pi}n}$ is the $n$-th root of unity \cite{Davis70}.

\smallskip

Circulant graphs belong to the broader class of Cayley graphs. Let $\Gamma$ be a multiplicative group with identity $e$. For a subset $S \subset \Gamma$ where $e\not\in S$ and $S^{-1} = \{s^{-1}\ |\ s\in S\}=S$, the Cayley graph $X = Cay(\Gamma,S)$ is the undirected graph with vertex set $V(X)=\Gamma$ and edge set $E(X) = \{\{a,b\}\ |\ ab^{-1}\in S\}$. A graph is circulant if it is a Cayley graph on a cyclic group, meaning its adjacency matrix is cyclic.

A graph is {\it integral} if all its eigenvalues are integers. A circulant graph $G(n;S)$ is integral if and only if
$$
S=\bigcup_{d \in D} G_n(d),
$$
for some set of divisors $D \subseteq D_n$ \cite {wasin}. Here $G_n(d)=\{ k \ : \ \gcd(k,n)=d, \ 1\leq k \leq n-1 \}$, and $D_n$ is the set of all divisors of $n$, strictly less than $n$.

Therefore, an {\it integral circulant graph} (hereafter $\ICG$) is entirely defined by its order $n$ and the set of divisors $D \subseteq D_n$, and is denoted by $\ICG_n(D)$. A prominent example is the unitary Cayley graph, defined as $\ICG_n(\{1\})$. The degree of $\ICG_n(D)$ is given by $\deg \ICG_n(D)=\sum_{d \in D}\varphi(\frac{n}{d})$, where $\varphi(n)$ is the Euler totient function \cite{HardyWright}. If $D=\{d_1,\ldots,d_k\}$, it is easy to see that $\ICG_n(D)$ is connected if and only if $\gcd(d_1,\ldots,d_k)=1$ \cite{Hwang03}.

Let $c(j,n)$ denote the {\it Ramanujan function} \cite[p.~55]{HardyWright}, defined as:
\begin{equation}
c(j,n)=\mu(t_{n,j})\frac{\varphi(n)}{\varphi(t_{n,j})}, \quad t_{n,j}=\frac n{\gcd(n,j)}, \quad 0\leq j\leq n-1,
\label{ramanujan}
\end{equation} 
where $\mu$ is the M\"obius function:
\begin{eqnarray*}
\mu(n)&=&\left\{
\begin{array}{rl}
1, &  \mbox{if}\ n=1  \\
0, & \mbox{if $n$ is not square-free} \\
(-1)^k, & \mbox{if $n$ is a product of $k$ distinct primes}.
\end{array} \right.
\end{eqnarray*}
Since $\gcd(n,j)=\gcd(n,n-j)$, it directly follows from (\ref{ramanujan}) that $t_{n,j} = t_{n,n-j}$, yielding the symmetry property $c(j,n)=c(n-j,n)$ for $0\leq j\leq n$. The eigenvalues of $\ICG_n(D)$ can be expressed entirely in terms of the Ramanujan function \cite{wasin} as:
\begin{equation}
\lambda_j(n,D)=\sum_{d\in D} c(j,\frac{n}{d}),
\label{ldef} 
\end{equation}
where $0\leq j\leq n-1$. Whenever clear from the context, we will abbreviate $\lambda_j(n,D)$ to $\lambda_j$.


\section{The uniquely colorable graph family $G_p$}

While the broader motivation of this paper encompasses multiple open conjectures regarding uniquely colorable graphs, this specific section is dedicated to resolving Problem 3.6 posed by Klotz and Sander in 2017 \cite{Klotz-2017}. This problem asks whether every uniquely colorable circulant Cayley graph must satisfy $\chi(G) = \omega(G)$. Determining unique colorability and the exact chromatic number of a graph is notoriously difficult, as there is no universal algorithmic test. The foundational approach relies on finding the exact independence number $\alpha(G)$. If the independence number is exactly $\alpha(G) = \vert{}V(G)\vert{} / \chi(G)$, every color class is forced to be a maximum independent set. When a graph's structural topology strictly restricts such maximum independent sets to a single unique partition, unique colorability is rigidly guaranteed. Thus, finding a tight bound for $\alpha(G)$ is the critical first step.

A standard algebraic tool to upper bound $\alpha(G)$ is the Hoffman ratio bound. Using the graph's spread $S(G) = \lambda_{max} - \lambda_{min}$ and noting $\lambda_{min} < 0$, this bound can be elegantly rewritten as a trade-off between the absolute least eigenvalue and the spread:
\begin{equation}\label{eq:hoffman-tradeoff}
    \alpha(G) \le \frac{|V(G)|}{1 - \frac{\lambda_{max}}{\lambda_{min}}} = |V(G)| \frac{|\lambda_{min}|}{S(G)}.
\end{equation}

To minimize this bound and force unique colorability, a natural strategy is to maximize both the spread and $|\lambda_{min}|$. In \cite{Basic-2025}, it was shown that among all connected integral circulant graphs of order $n$, the graph $\ICG_n(\overline{D}_{p_1})$ achieves the absolute optimal trade-off, yielding a perfectly tight bound of $\alpha \le n/p_1$. However, such spectrally extremal graphs contain cliques of size $p_1$, satisfying $\chi(G) = \omega(G) = p_1$. Consequently, they naturally support, rather than disprove, the Klotz-Sander conjecture.

On the opposite end of the spectral extremes, graphs minimizing the spread \cite{Basic-2024} experience a notable failure of the Hoffman bound. By Theorem 5 in \cite{Basic-2024}, for an order $n = 12p = 2^2 \cdot 3^1 \cdot p^1$ decomposing into $s=3$ prime-power components, the minimal spread is achieved by the divisor set $D = \{12p/4, 12p/3, 12p/p\} = \{3p, 4p, 12\}$. 
According to the theorem, the minimal spread evaluates explicitly as the sum of these prime-power components:
\begin{equation*}
    S(G_{min}) = \sum_{i=1}^s p_i^{\alpha_i} = 4 + 3 + p = p + 7.
\end{equation*}

For this extremal graph, the maximum eigenvalue evaluates to the degree $\lambda_{max} = \sum_{i=1}^s \phi(p_i^{\alpha_i}) = \phi(4) + \phi(3) + \phi(p) = 2 + 2 + (p-1) = p + 3$. Using the spread definition $S(G_{min}) = \lambda_{max} + |\lambda_{min}|$, we directly extract the absolute minimum eigenvalue as $|\lambda_{min}| = (p+7) - (p+3) = 4$. Substituting these parameters into the right-hand side of (\ref{eq:hoffman-tradeoff}) cleanly demonstrates the reduction:
\begin{equation*}
    \alpha(G_{min}) \le |V(G_{min})| \frac{|\lambda_{min}|}{S(G_{min})} = 12p \frac{4}{p+7} = \frac{48p}{p+7}.
\end{equation*}

For $p=5$ ($n=60$), the formula evaluates to an exact integer, $\alpha \le 20$. However, the divisor $12$ natively generates exactly $12$ disjoint cliques of size $5$, providing a vertex clique cover that strictly restricts the true independence number to $\alpha(G_{min}) \le 12$. Furthermore, since the minimal divisor is $12$, the consecutive vertices $\{0, 1, \dots, 11\}$ form an independent set of size $12$, proving exactly $\alpha(G_{min}) = 12$. Consequently, $\omega(G_{min}) \le 60/12 = 5$, and since a clique of size $5$ exists, $\omega(G_{min}) = 5$. With an explicit coloring---partitioning the vertices into five sequential blocks $V_i = \{12i, \dots, 12i+11\}$, where no intra-block distance can reach the minimum divisor $12$---the chromatic number is verified as $\chi(G_{min}) = 5$. This reveals a fundamental topological limitation: the minimal spread graph satisfies $\chi(G) = \omega(G) = 5$, failing to resolve the conjecture while the Hoffman bound completely fails to capture its true independence number.

To construct a verifiable counterexample where $\chi(G) > \omega(G)$, we must deliberately sacrifice the minimal spread to gain structural clarity. Retaining the order $n=12p$, we introduce the denser, yet highly structured family $G_p = \ICG_{12p}(2, p)$. 

To test whether spectral methods can tightly bound $\alpha(G_p)$, we must first determine its extreme eigenvalues. The degree (maximum eigenvalue) is given by the sum of the Euler totient functions of its co-divisors:
$$ \lambda_{max} = \varphi(\frac{12p}{2}) + \varphi(\frac{12p}{p}) = \varphi(6p) + \varphi(12) = 2(p-1) + 4 = 2p + 2. $$
The minimum eigenvalue requires a precise evaluation of the Ramanujan sums, as established below.

\begin{lemma}
\label{lem:min_eigenvalue}
For any prime $p \ge 5$, the minimal eigenvalue of the integral circulant graph $G_p = \ICG_{12p}(2, p)$ is $\lambda_{min} = 2 - 2p$, achieved at the index $j = 3p$.
\end{lemma}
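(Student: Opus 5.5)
We use the formula \eqref{ldef} with $D=\{2,p\}$. Since $\frac{12p}{2}=6p$ and $\frac{12p}{p}=12$, every eigenvalue splits as
$$\lambda_j = c(j,6p) + c(j,12), \qquad 0\le j\le 12p-1 .$$
The plan is to list all values each Ramanujan sum can take via \eqref{ramanujan}, according to $t=t_{\cdot,j}$. We then show that the very negative value of the first sum forces the second sum to vanish, and that every other combination stays above $2-2p$.

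\textbf{Step 1 (range of $c(j,6p)$).} Here $t=\frac{6p}{\gcd(6p,j)}$ runs over the divisors of $6p$, and $\varphi(6p)=2(p-1)$. The negative values arise only when $\mu(t)=-1$, that is, for $t\in\{2,3,p,6p\}$. These give $-2(p-1)$, $-(p-1)$, $-2$ and $-1$ respectively; every other $t$ gives a value $\ge 0$. Hence $c(j,6p)=-2(p-1)$ exactly when $t=2$, i.e. when $\gcd(6p,j)=3p$. This means $j$ is an odd multiple of $3p$, so $j\in\{3p,9p\}$. For all other $j$ we have $c(j,6p)\ge -(p-1)$.

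\textbf{Step 2 (range of $c(j,12)$).} Here $t\in\{1,2,3,4,6,12\}$ and $\varphi(12)=4$. The corresponding values are $4,-4,-2,0,2,0$, so $c(j,12)\ge -4$ always.

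\textbf{Step 3 (combine).} Take first $j\in\{3p,9p\}$. Since $p\ge5$ is coprime to $12$, we get $\gcd(12,j)=3$, so $t=4$ and $\mu(4)=0$. Thus $c(j,12)=0$ and $\lambda_{3p}=-2(p-1)=2-2p$. For every other $j$, Steps 1 and 2 give
$$\lambda_j\ge -(p-1)-4=-p-3 .$$
We have $-p-3\ge 2-2p$ if and only if $p\ge 5$. So for $p\ge5$ no eigenvalue lies below $2-2p$, and $\lambda_{\min}=2-2p$ is attained at $j=3p$.

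The main point of care is this last inequality. The bound $-p-3$ becomes an equality exactly when $p=5$, so for $p=5$ the minimum $2-2p=-8$ may also be attained at other indices. This is harmless, because the lemma only claims that the minimum is attained at $j=3p$. I would also double-check that the ordinary summation indices in the eigenvalue formula coincide with the Ramanujan-sum indexing, so that no sign or scaling issue enters.
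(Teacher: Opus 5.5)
Your proof is correct and follows the paper's route: split $\lambda_j=c(j,6p)+c(j,12)$, tabulate the possible Ramanujan values, and combine. The only difference is in the final step. There you replace the paper's mod-$3$ exclusion of the pair $(1-p,-4)$ with the cruder bound $-(p-1)-4\ge 2-2p$, valid for $p\ge 5$. Your observation that $c(j,6p)=2-2p$ forces $j\in\{3p,9p\}$, and hence $c(j,12)=0$, also explicitly handles the case ``$c(j,6p)=2-2p$ with $c(j,12)<0$'', which the paper's argument passes over.
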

\begin{proof}
Let $n = 12p$ and $D = \{2, p\}$. By (\ref{ldef}), the eigenvalues are:
$$ \lambda_j = c(j, 6p) + c(j, 12). $$
We determine the possible values for each Ramanujan term $c(j, x) = \mu(t_{x,j}) \frac{\varphi(x)}{\varphi(t_{x,j})}$, where $t_{x,j} = \frac{x}{\gcd(x,j)}$.

For $x = 6p$, since $p \ge 5$, we have $\varphi(6p) = 2(p-1)$. The strictly negative values occur when $t_{6p,j} \in \{2, 3, p, 6p\}$ (where $\mu = -1$). Evaluating $-\frac{2(p-1)}{\varphi(t_{6p,j})}$ for these divisors yields $2-2p$, $1-p$, $-2$, and $-1$, respectively. Thus, $c(j, 6p) \ge 2-2p$.

For $x = 12$, we have $\varphi(12) = 4$. The non-zero values occur for square-free $t_{12,j} \in \{1, 2, 3, 6\}$. Evaluating $\mu(t_{12,j})\frac{4}{\varphi(t_{12,j})}$ yields $c(j, 12) \in \{4, -4, -2, 2\}$. Thus, $c(j, 12) \ge -4$.

For $j = 3p$, we have $\gcd(6p, 3p) = 3p \implies t_{6p, 3p} = 2$, yielding $c(3p, 6p) = 2-2p$. Simultaneously, $\gcd(12, 3p) = 3 \implies t_{12, 3p} = 4$. Since $4$ is not square-free, $\mu(4) = 0$, giving $c(3p, 12) = 0$. Hence, $\lambda_{3p} = 2-2p$.

To prove this is the strict global minimum, suppose there exists an index $j$ such that $\lambda_j < 2-2p$. Since $c(j, 6p)$ can take values $\{2-2p, 1-p, -2, -1\}$, and $c(j, 12) \ge -4$, the only theoretical way to obtain $\lambda_j < 2-2p$ (given $p \ge 5$) is if the sub-optimal term $c(j, 6p) = 1-p$ combines with a negative $c(j, 12)$, specifically $-4$ or $-2$.

If $c(j, 12) = -4$, then $t_{12,j} = 2$, meaning $\gcd(12, j) = 6$, which forces $j$ to be a multiple of $3$. However, $c(j, 6p) = 1-p$ requires $t_{6p,j} = 3$, meaning $\gcd(6p, j) = 2p$, which explicitly forces $j$ not to be a multiple of $3$. This is a contradiction.

If $c(j, 12) = -2$, the total eigenvalue would be $\lambda_j = (1-p) - 2 = -1-p$. But for all $p \ge 5$, we have $-1-p > 2-2p$, which contradicts the assumption that $\lambda_j < 2-2p$. 

Therefore, no index $j$ can yield a sum smaller than $2-2p$, confirming $\lambda_{min} = 2-2p$.
\end{proof}

With the spread of $G_p$ now established as $S(G_p) = (2p+2) - (2-2p) = 4p$, we evaluate the Hoffman bound:
$$ \alpha(G_p) \le 12p \frac{2p-2}{4p} = 6(p-1). $$
Since our ultimate goal is to prove that the exact independence number is $4p$, this spectral bound is notably loose (e.g., yielding $\alpha \le 24$ instead of the true value $20$ for $p=5$). The failure of the Hoffman bound to tightly capture the true independence number emphasizes that spectral methods alone cannot verify the unique colorability of $G_p$. This explicitly necessitates the rigorous combinatorial approach developed in the following subsection.

\subsection{Structural properties and exact independence number}

Recall the graph family $G_p = \ICG_{12p}(2, p)$ for a prime $p \ge 5$. 
For $i \in \{0,1,2\}$, define the classes $C_i = \{x \in \mathbb{Z}_{12p} \mid x \equiv i \pmod 3\}$. Since $12p$ is divisible by $3$, the sets $C_0, C_1, C_2$ partition $V(G_p)$ into three sets of equal size $4p$.
For any $x, y \in C_i$, the difference $x-y$ is a multiple of $3$. Since $p \ge 5$, neither $2$ nor $p$ is divisible by $3$. Thus, $\gcd(x-y, 12p) \notin \{2, p\}$, meaning there are no edges within any $C_i$. The classes form a proper $3$-coloring, implying $\chi(G_p) \le 3$. By Theorem~5 in \cite{Saxena-2007}, an integral circulant graph is bipartite if and only if its order is even and every divisor in its defining set is odd. Since the order $12p$ is even but $2 \in D$, $G_p$ is not bipartite, yielding exactly $\chi(G_p) = 3$.

\begin{lemma}\label{lem:structure-Gp}
For every distinct $i, j \in \{0,1,2\}$, the induced subgraph $G_p[C_i \cup C_j]$ is isomorphic to $K_2 \otimes (C_4 \square K_p)$.
\end{lemma}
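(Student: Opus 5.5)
The plan is to pass to the Chinese Remainder decomposition $\mathbb{Z}_{12p}\cong\mathbb{Z}_4\times\mathbb{Z}_3\times\mathbb{Z}_p$, $x\mapsto(x \bmod 4,\ x\bmod 3,\ x\bmod p)=(a,b,c)$. Then I will read off the adjacency rule of $G_p$ coordinatewise, and finally apply an explicit twist to turn the resulting graph into $K_2\otimes(C_4\square K_p)$.

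\emph{Step 1 (adjacency in coordinates).} For a difference $d=x-y$, we have $\gcd(d,12p)=2$ iff $d\equiv 2\pmod 4$, $3\nmid d$ and $p\nmid d$. Likewise $\gcd(d,12p)=p$ iff $d$ is odd, $3\nmid d$ and $p\mid d$. In coordinates, with $\Delta a,\Delta b,\Delta c$ the componentwise differences, $x\sim y$ iff $\Delta b\neq 0$ and either
\[
(\Delta a=2,\ \Delta c\neq 0)\quad\text{or}\quad(\Delta a=\pm1,\ \Delta c=0).
\]

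\emph{Step 2 (tensor structure).} On $C_i\cup C_j$ the coordinate $b$ takes exactly the two values $i,j$. So the condition $\Delta b\neq0$ just says the endpoints lie in different classes, which is an edge of $K_2$. Let $H$ be the graph on $\mathbb{Z}_4\times\mathbb{Z}_p$ with the rule displayed above. Then $G_p[C_i\cup C_j]\cong K_2\otimes H$ via $x\mapsto(b,(a,c))$; this is a bijection by CRT. I will note that $H$ has no loops, since $\Delta a=0,\Delta c=0$ is not an edge (indeed $\gcd(d,12p)\in\{4p,12p\}$ in that case), so no spurious edges between $(i,a,c)$ and $(j,a,c)$ arise.

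\emph{Step 3 (the twist).} $H$ itself is not $C_4\square K_p$, because its ``$K_p$-edges'' carry $\Delta a=2$ instead of $\Delta a=0$. The key step, and the only non-routine one, is to exploit the bipartite $K_2$ factor. Define $\psi(i,a,c)=(i,a,c)$ and $\psi(j,a,c)=(j,a+2,c)$. Every edge joins the $i$-side to the $j$-side, so the effective $a$-difference of each edge is shifted by exactly $2$. Thus an edge with $\Delta a=2,\Delta c\neq0$ becomes $\Delta a=0,\Delta c\neq 0$, which is a $K_p$-edge. An edge with $\Delta a=\pm1,\Delta c=0$ becomes $\Delta a=\mp1,\Delta c=0$, which is still a $C_4$-edge. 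Since $\psi$ is a bijection and the same computation runs backwards, non-edges go to non-edges. Hence $\psi$ is an isomorphism onto $K_2\otimes(C_4\square K_p)$.

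Care is needed only in checking the sign and orientation conventions in Step 3; the rest is direct bookkeeping with CRT.
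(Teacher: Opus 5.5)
Your proof is correct. Its decisive step matches the paper's: your twist $\psi(j,a,c)=(j,a+2,c)$ is exactly the offset $2p$ that the paper hides in the constant $c_r$ defined by $3c_r+r\equiv 2p \pmod{4p}$, because $2p$ corresponds to $(2,0)$ under $\mathbb{Z}_{4p}\cong\mathbb{Z}_4\times\mathbb{Z}_p$.

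The packaging differs. The paper first reduces to $C_0\cup C_r$ by translation and parametrizes the two classes as $3l$ and $3k+r$. It then works modulo $4p$ with the multiplier $3$, and uses a parity case analysis to show $\gcd(3t+2p,4p)\in\{2,p\}$ iff $\gcd(t,4p)\in\{4,p\}$. This lands on $K_2\otimes \ICG_{4p}(\{4,p\})$, and CRT is applied only at the end to get $C_4\square K_p$.

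You instead apply CRT to all of $\mathbb{Z}_{12p}$ first, so the mod-$3$ coordinate is literally the $K_2$ factor. This handles every pair $\{i,j\}$ uniformly and avoids both the rescaling by $3$ and the parity bookkeeping. It also exposes something the paper leaves implicit. Your intermediate graph $H$ is $\ICG_{4p}(\{2,p\})$, which is triangle-free and so not isomorphic to $C_4\square K_p$; only the bipartite double covers coincide, so some twist is unavoidable.

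The paper's route, in exchange, stays within the language of integral circulant graphs, identifying the bipartite piece directly with $K_2\otimes\ICG_{4p}(\{4,p\})$.
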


\begin{proof}
By the circulant symmetry, it suffices to analyze $H_r = G_p[C_0 \cup C_r]$ for $r \in \{1,2\}$. 
Since there are no edges within $C_0$ or $C_r$, $H_r$ is bipartite. Let $a = 3l \in C_0$ and $b = 3k + r \in C_r$. 
The difference is $b - a = 3(k - l) + r$. Since $r \in \{1, 2\}$, $b - a$ is not divisible by $3$. 
Because $\gcd(b - a, 3) = 1$, we can simplify the greatest common divisor:
\[ \gcd(b - a, 12p) = \gcd(b - a, 4p). \]

Now we evaluate the difference modulo $4p$. Since $p \ge 5$, we have $\gcd(3, 4p) = 1$. Thus, there exists a unique constant $c_r \in \mathbb{Z}_{4p}$ satisfying $3c_r + r \equiv 2p \pmod{4p}$.
Define $t \equiv k - l - c_r \pmod{4p}$. Substituting this into the difference modulo $4p$ yields:
\[ b - a \equiv 3(t + c_r) + r = 3t + (3c_r + r) \equiv 3t + 2p \pmod{4p}. \]
Therefore, $\gcd(b - a, 12p) = \gcd(3t + 2p, 4p)$. 

To determine when this greatest common divisor belongs to $\{2, p\}$, we establish a strict equivalence by analyzing the parity of $t$:
\begin{itemize}
    \item If $t$ is odd, then $3t + 2p$ is odd. Its greatest common divisor with $4p$ must be an odd divisor of $p$, meaning $\gcd(3t + 2p, 4p) \in \{1, p\}$. It equals $p$ if and only if $p \mid (3t + 2p) \implies p \mid 3t \implies p \mid t$. Since $t$ is odd, this is strictly equivalent to $\gcd(t, 4p) = p$.
    \item If $t$ is even, let $t = 2m$. Then $3t + 2p = 2(3m + p)$. Since this value is even, its greatest common divisor with $4p$ is also even, meaning it can only be $2$ (it cannot be $p$, as $p$ is odd). The condition $\gcd(2(3m + p), 4p) = 2$ holds if and only if $2 \nmid (3m + p)$ and $p \nmid (3m + p)$. Since $p$ is odd, $3m + p$ is odd if and only if $m$ is even. Furthermore, $p \nmid (3m + p)$ if and only if $p \nmid 3m \implies p \nmid m$. The combined conditions that $m$ is even and $p \nmid m$ are strictly equivalent to $4 \mid t$ and $p \nmid t$, which means exactly $\gcd(t, 4p) = 4$.
\end{itemize}
Consequently, $a \sim b$ in $H_r$ if and only if $\gcd(t, 4p) \in \{4, p\}$. 

The mapping $\varphi_r(3l) = (0, l + c_r)$ and $\varphi_r(3k + r) = (1, k)$ forms a bijection to $V(K_2 \otimes \ICG_{4p}(4, p))$ that strictly preserves adjacency. 
Finally, identifying each vertex $x \in \mathbb{Z}_{4p}$ with $(x \bmod 4, x \bmod p) \in \mathbb{Z}_4 \times \mathbb{Z}_p$, the greatest common divisor is $4$ if the first coordinates match and second are distinct, and $p$ if the second coordinates match and first are adjacent in $C_4$. This naturally yields $\ICG_{4p}(4, p) \cong C_4 \square K_p$, completing the proof.
\end{proof}

\begin{lemma}\label{lem:alpha-Gp}
Let $p \ge 5$ be a prime. The independence number of the graph $G_p$ is $4p$. An independent set attains this maximum size if and only if it is entirely contained within a single principal class. Furthermore, any independent set that mixes vertices from multiple classes has a strictly smaller size: it is bounded by $3p$ if it intersects exactly two classes, and by $3p+1$ if it intersects all three classes. Both bounds are sharp.
\end{lemma}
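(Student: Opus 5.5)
The plan is to reduce everything to Lemma~\ref{lem:structure-Gp} and a vertex-isoperimetric estimate in $H = C_4 \square K_p$. Write $I_k = I \cap C_k$ for an independent set $I$.

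\textbf{Single class and the upper bound $4p$.} If $I$ lies in one class, then $|I| \le |C_k| = 4p$, and each $C_k$ is independent. So it suffices to prove the mixed bounds, since $3p+1 < 4p$ for $p \ge 5$. These give $\alpha(G_p)=4p$ and the stated characterization of maximum independent sets.

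\textbf{Two classes.} By Lemma~\ref{lem:structure-Gp}, $G_p[C_i\cup C_j] \cong K_2 \otimes H$. Thus $I_i, I_j$ correspond to nonempty sets $X, Y \subseteq V(H) = \Z_4\times\Z_p$ with no $x\in X$, $y\in Y$ adjacent in $H$. Equivalently $Y \subseteq V(H)\setminus N_H(X)$, where $X$ itself may meet $Y$ because $H$ has no loops. Hence $|X|+|Y| \le 4p - (|N_H(X)| - |X|)$. The key step is the expansion inequality
\[ |N_H(X)| \ge |X| + p \quad \text{whenever } \emptyset\ne X \text{ and } N_H(X)\ne V(H). \]
I would prove it layer by layer. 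Let $L_k = \{k\}\times\Z_p$, a copy of $K_p$, and $X_k = X\cap L_k$. Then $N_H(X)\cap L_k$ contains all of $L_k$ if $|X_k|\ge 2$, and contains $L_k\setminus X_k$ if $|X_k|=1$. It also contains the projections of $X_{k\pm1}$ into $L_k$. Summing over the four layers and splitting according to which layers contain $0$, $1$, or $\ge 2$ vertices of $X$ gives the inequality. The only way to avoid gaining a full $p$ is for $N_H(X)$ to be everything, which leaves $Y=\emptyset$. Sharpness: take $X=\{(0,0)\}$, so $N_H(X)$ has size $p+1$, and let $Y$ be the remaining $3p-1$ vertices. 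This gives $3p$.

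\textbf{Three classes (main obstacle).} Summing the three pairwise bounds only gives $|I|\le 4.5p$, so more is needed. Order the classes so that $|I_0|\le|I_1|,|I_2|$. I would use the refined pairwise bound $|I_1|+|I_2| \le 3p - e(X_1)$, where $e(X) = |N_H(X)|-|X|-p \ge 0$ is the excess expansion of the image of $I_1$. Together with the two constraints that $I_0$ imposes on $I_1$ and $I_2$, this should yield $|I| \le 3p + |I_0| - (\text{penalty})$. I would first handle $|I_0|=1$ directly: a single vertex forbids $p+1$ vertices in each of the other two classes, and one checks the result is at most $3p+1$. For $|I_0|\ge 2$, I would show the penalty is at least $|I_0|-1$. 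This holds because two or more vertices of $I_0$ either fill a whole layer's neighbourhood (forbidding $\ge 2p$ vertices in each other class) or spread across layers, forbidding additional projected vertices. This case analysis is the step I expect to be most delicate. Sharpness comes from one vertex in $C_0$ together with an extremal two-class configuration of size $3p$ in $C_1\cup C_2$ avoiding its neighbourhoods, chosen via the explicit isomorphisms $\varphi_r$. This gives $3p+1$.
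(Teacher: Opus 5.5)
\textbf{There is a genuine gap in the three-class step, and the bound it targets is false.} Your single-class reduction and your two-class argument are sound. The expansion inequality $|N_H(X)|\ge |X|+p$ in $C_4\square K_p$ does hold: a layer $L_k$ not contained in $N_H(X)$ has $|X_k|\le 1$, and in each sub-case the layer excesses $|N_H(X)\cap L_k|-|X_k|$ add up to at least $p$. The choice $X=\{(0,0)\}$ shows that $3p$ is attained.

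The three-class bound $3p+1$ fails already for this set:
\[
I=\{x\in\Z_{12p} : 2p\mid x\}\ \cup\ \{x\in\Z_{12p} : x\equiv 1 \pmod 4,\ p\nmid x\}.
\]
\begin{itemize}
\item Differences inside the first part are multiples of $2p$.
\item Differences inside the second part are multiples of $4$.
\item Differences across the two parts are odd and prime to $p$.
\end{itemize}
So no difference has $\gcd$ with $12p$ in $\{2,p\}$, and $I$ is independent. It has $6+3(p-1)=3p+3$ elements, exactly $p+1$ in each of $C_0,C_1,C_2$. For $p=5$ this gives an independent set of size $18>16$ meeting all three classes. Here $|I_0|=p+1$, so your claimed penalty of at least $|I_0|-1$ is violated.

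The reason is that Lemma~\ref{lem:structure-Gp} is only a pairwise statement. In CRT coordinates $\Z_{12p}\cong\Z_3\times\Z_4\times\Z_p$ one has $G_p\cong K_3\otimes Q$, where
\[
Q=\mathrm{Cay}\bigl(\Z_4\times\Z_p,\{(2,c):c\ne 0\}\cup\{(1,0),(3,0)\}\bigr),
\]
a triangle-free graph with $\alpha(Q)=p+1$. The identification of $G_p[C_i\cup C_j]$ with $K_2\otimes(C_4\square K_p)$ translates one of the two classes by $2p$, which is $(2,0)$ in $\Z_4\times\Z_p$; this is the role of $c_r$ in that proof. Three classes cannot be pairwise offset by $(2,0)$, since $(2,0)+(2,0)=0$. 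So the layer heuristics you want to apply to $I_0$ against $I_1$ and $I_2$ simultaneously are not available. In fact $I=\Z_3\times S$ for the maximum independent set $S=\{(0,0),(2,0)\}\cup\{(1,c):c\ne 0\}$ of $Q$. The paper's own case analysis places all three classes in one copy of $C_4\square K_p$ with $K_p$ layers, and fails for the same reason.

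\textbf{What survives, and how to fix it.} Theorem~\ref{thm:infinite-counterexamples} only needs $\alpha(G_p)=4p$ with the $C_i$ as the only maximum independent sets, and this is still true. The correct sharp bound for independent sets meeting all three classes is $3p+3$, which is less than $4p$ for $p\ge 5$. You can prove it in the $Q$-picture:
\begin{itemize}
\item The points of $Q$ used by at least two classes form an independent set $J$ of $Q$, and no class uses any point of $N_Q(J)$. Hence $|I|\le 4p-|N_Q(J)|+2|J|$.
\item Checking the possible layer patterns of independent sets in $Q$ gives $|N_Q(J)|\ge 2|J|+p-3$ whenever $J\ne\emptyset$, with equality for $J=S$. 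This yields $|I|\le 3p+3$.
\item If $J=\emptyset$, apply your two-class bound to one nonempty class against the union of the others; this gives $|I|\le 3p$.
\end{itemize}
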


\begin{proof}
Let $C_0, C_1$, and $C_2$ denote the three principal classes of $G_p$, each of size $4p$. Let $A$ be an independent set in $G_p$. If $A$ is entirely contained within a single class $C_i$, its size is trivially bounded by $|C_i| = 4p$, which is explicitly achieved when $A = C_i$. 

Assume $A$ intersects at least two different classes, and let $A_i = A \cap C_i$. To analyze the structure of $A$, we map its vertices to the projection space $\mathbb{Z}_4 \times \mathbb{Z}_p$. This space consists of four horizontal \emph{layers}, defined as $L_k = \{k\} \times \mathbb{Z}_p$ for $k \in \{0, 1, 2, 3\}$. Let $x_k, y_k, z_k$ denote the number of vertices $A$ selects in layer $L_k$ from classes $C_0, C_1, C_2$, respectively. 

The structural limits of $G_p$ impose two strict rules on these variables:
\begin{enumerate}
    \item \textbf{Intra-layer rule:} Each layer $L_k$ induces a $K_p$ clique. Therefore, an independent set can select at most one vertex per class per layer ($x_k, y_k, z_k \le 1$), unless it selects vertices exclusively from one class. We say a class \emph{dominates} layer $L_k$ if it contributes $\ge 2$ vertices. If a layer is dominated, the other two classes must have 0 vertices in it, bounding the layer capacity to $p$. If no class dominates, the layer is \emph{mixed}, bounding its capacity to $3$.
    \item \textbf{Inter-layer rule:} Distinct classes in adjacent layers must be fully disjoint. This mandates $x_k + y_{k \pm 1} \le p$, $x_k + z_{k \pm 1} \le p$, and $y_k + z_{k \pm 1} \le p$ (indices modulo 4).
\end{enumerate}

We classify the independent set $A$ based on the number of dominated layers it contains:

\textbf{Case 1: At most two dominated layers.} \\
If $A$ has $\le 2$ dominated layers, they contribute at most $2p$ vertices. The remaining layers are mixed, contributing at most $3$ vertices each, yielding the algebraic bound $|A| \le 2p + 6$. Since $p \ge 5$, we inherently have $2p + 6 \le 3p + 1 < 4p$. (Note that for $p=5$, where $2p+6 = 16$, inter-layer constraints restrict the actual maximum size to only $14$). These configurations cannot yield a maximum independent set.

\textbf{Case 2: Four dominated layers.} \\
By the inter-layer rule, adjacent layers cannot be dominated by different classes (e.g., if $x_0 = p$ and $y_1 = p$, then $x_0 + y_1 = 2p > p$, a strict contradiction). Thus, if all four layers are dominated, they must be dominated by the exact same class, meaning $A \subseteq C_i$. Since we assumed $A$ mixes classes, this configuration is impossible.

\textbf{Case 3: Exactly three dominated layers.} \\
This is the only configuration where large mixed independent sets exist. Let $C_0$ dominate exactly three layers (without loss of generality, $L_0, L_1, L_3$). The total contribution from $C_0$ in these layers is bounded by $x_0, x_1, x_3 \le p$. The remaining layer, $L_2$, must be a mixed layer containing the elements from the other classes.

Since $A$ intersects other classes, $L_2$ must contain vertices from $C_1$ ($y_2 \ge 1$) and possibly $C_2$ ($z_2 \ge 1$). Because $L_2$ is mixed, the intra-layer rule forces $x_2, y_2, z_2 \le 1$. 
The presence of $y_2$ or $z_2$ triggers the inter-layer rule, strictly restricting the adjacent pure layers $L_1$ and $L_3$:
$$ x_1 + \max(y_2, z_2) \le p \implies x_1 \le p - 1 $$
$$ x_3 + \max(y_2, z_2) \le p \implies x_3 \le p - 1 $$

We now evaluate the total size $|A| = x_0 + x_1 + x_3 + (x_2 + y_2 + z_2)$:
\begin{itemize}
    \item If $A$ intersects \textbf{exactly two classes} ($C_0$ and $C_1$), then $y_2 = 1$ and $z_2 = 0$. The mixed layer $L_2$ contains at most $x_2 + y_2 \le 2$ vertices. The total size is bounded by $|A| \le p + (p - 1) + (p - 1) + 2 = 3p$.
    
    \item If $A$ intersects \textbf{all three classes}, then $y_2 = 1$ and $z_2 = 1$. The mixed layer $L_2$ contains at most $x_2 + y_2 + z_2 \le 3$ vertices. The total size is bounded by $|A| \le p + (p - 1) + (p - 1) + 3 = 3p + 1$.
\end{itemize}

\textbf{Equality Conditions (Maximal Mixed Independent Sets):} \\
To achieve the strict maximal bounds for mixed sets ($|A| = 3p$ or $|A| = 3p+1$), the configuration must precisely satisfy all capacity limits. This occurs if and only if the principal class provides exactly $3p - 1$ vertices, and the remaining classes provide exactly $1$ vertex each. Specifically: layer $L_0$ is fully saturated by $C_0$ ($X_0 = \mathbb{Z}_p$); the mixed layer $L_2$ selects exactly one vertex from each present class, all sharing the identical coordinate $v \in \mathbb{Z}_p$ ($X_2 = Y_2 = Z_2 = \{v\}$); and the adjacent layers $L_1$ and $L_3$ contain all $C_0$ vertices except the single coordinate adjacent to $v$ ($X_1 = X_3 = \mathbb{Z}_p \setminus \{v\}$). This geometric arrangement and its forced inter-layer exclusions are clearly illustrated in Figure \ref{fig:extremal-bound}.

Since $3p + 1 < 4p$ strictly holds for $p \ge 5$, mixed independent sets can never exceed the size of a single pure class. Therefore, $\alpha(G_p) = 4p$, strictly and uniquely achieved by the pure classes.
\end{proof}

\begin{figure}[H]
\centering
\begin{tikzpicture}[
    layer/.style={ellipse, draw, thick, minimum width=3.4cm, minimum height=2cm, align=center},
    dominant/.style={ellipse, fill=gray!25, minimum width=2.6cm, minimum height=1.4cm},
    subdominant/.style={ellipse, fill=gray!25, minimum width=2.1cm, minimum height=1.4cm},
    single/.style={circle, fill=black, inner sep=1.5pt},
    macroedge/.style={line width=1.2pt, double distance=1.5pt}
]

\node[layer] (L0) at (0, 4.5) {};
\node[layer] (L1) at (-4, 2.25) {};
\node[layer] (L3) at (4, 2.25) {};
\node[layer] (L2) at (0, 0) {};

\node[above] at (L0.north) {$L_0 \cong K_p$};
\node[above] at (L1.north) {$L_1 \cong K_p$};
\node[above] at (L3.north) {$L_3 \cong K_p$};
\node[below] at (L2.south) {$L_2 \cong K_p$ (Mixed layer)};

\draw[macroedge] (L0) -- (L1);
\draw[macroedge] (L0) -- (L3);
\draw[macroedge] (L1) -- (L2);
\draw[macroedge] (L3) -- (L2);

\node[dominant] (X0) at (L0) {};
\node at (X0) {$X_0$ ($x_0 = p$)};

\node[subdominant, anchor=east] (X1) at ([xshift=-1mm]L1.east) {};
\node at (X1) {$X_1$ ($x_1 = p-1$)};
\node[single, label={[font=\footnotesize]above:{$v \notin X_1$}}, draw=black, fill=white] (hole1) at ([xshift=6mm]L1.west) {};

\node[subdominant, anchor=west] (X3) at ([xshift=1mm]L3.west) {};
\node at (X3) {$X_3$ ($x_3 = p-1$)};
\node[single, label={[font=\footnotesize]above:{$v \notin X_3$}}, draw=black, fill=white] (hole3) at ([xshift=-6mm]L3.east) {};

\node[single, label=below:{$X_2, Y_2, Z_2 = \{v\}$}] (v2) at (L2) {};

\draw[->, thick, dashed] (v2) to[bend left=25] node[midway, left=2mm, font=\footnotesize, align=right] {Forces\\ exclusion} (hole1);
\draw[->, thick, dashed] (v2) to[bend right=25] node[midway, right=2mm, font=\footnotesize, align=left] {Forces\\ exclusion} (hole3);

\end{tikzpicture}
\caption{Visualization of the extremal independent set configuration yielding $|A| = 3p+1$ (Lemma \ref{lem:alpha-Gp}). The projection of elements from $C_1$ and $C_2$ onto a single node $v$ in $L_2$ strictly forces the exclusion of $v$ from the sets $X_1$ and $X_3$ in adjacent layers, capping their capacity at $p-1$.}
\label{fig:extremal-bound}
\end{figure}

\begin{theorem}\label{thm:infinite-counterexamples}
For every prime $p \ge 5$, the graph $G_p = \ICG_{12p}(\{2, p\})$ is a uniquely $3$-colorable circulant graph with clique number $\omega(G_p) = 2$. Consequently, there exists an infinite family of uniquely colorable circulant graphs where $\chi(G) \neq \omega(G)$, providing a negative answer to Problem 3.6 in \cite{Klotz-2017}.
\end{theorem}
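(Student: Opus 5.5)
The proof has three parts: the chromatic number, unique colorability, and the clique number.

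\textbf{Chromatic number.} Before the statement of Lemma~\ref{lem:structure-Gp} we already established $\chi(G_p)=3$. The classes $C_0,C_1,C_2$ give a proper $3$-coloring, and $G_p$ is not bipartite by the criterion of \cite{Saxena-2007}.

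\textbf{Unique colorability.} Take an arbitrary proper $3$-coloring with color classes $V_1,V_2,V_3$. Each $V_i$ is independent, so Lemma~\ref{lem:alpha-Gp} gives $|V_i|\le \alpha(G_p)=4p$. Since $|V_1|+|V_2|+|V_3|=12p$, every class has size exactly $4p$, so each one is a maximum independent set. The equality part of Lemma~\ref{lem:alpha-Gp} then says each $V_i$ lies inside a single principal class $C_j$. Because $|V_i|=|C_j|=4p$, we get $V_i=C_j$. The three color classes are disjoint, so they must be three distinct principal classes. Hence the partition is $\{C_0,C_1,C_2\}$, and $G_p$ is uniquely $3$-colorable.

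\textbf{Clique number.} $G_p$ has edges (for example $0\sim 2$), so $\omega(G_p)\ge 2$; it remains to exclude triangles. Suppose $\{a,b,c\}$ is a triangle. Since each $C_i$ is independent, its three vertices lie in three distinct classes, say one in each of $C_0$, $C_1$ and $C_2$. I would argue this is impossible directly on differences rather than through the isomorphism of Lemma~\ref{lem:structure-Gp}. Each difference $a-b$, $b-c$, $a-c$ has $\gcd$ with $12p$ equal to $2$ or $p$.

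\emph{Case 1: some difference has $\gcd$ equal to $p$.} That difference is odd. The other two differences sum to it, so at least one of them is also odd. It therefore cannot have $\gcd$ equal to $2$, so its $\gcd$ is $p$ as well. Two differences divisible by $p$ force the third to be divisible by $p$, so the third also has $\gcd$ equal to $p$. All three differences are then odd. But $(a-b)+(b-c)=a-c$, and odd plus odd is even, a contradiction.

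\emph{Case 2: all three differences have $\gcd$ equal to $2$.} Then each difference is $\equiv 2 \pmod 4$. But $2+2\equiv 0\pmod 4$, contradicting $(a-b)+(b-c)=a-c\equiv 2 \pmod 4$.

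Therefore $G_p$ is triangle-free and $\omega(G_p)=2<3=\chi(G_p)$.

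\textbf{Conclusion and main obstacle.} Since there are infinitely many primes $p\ge 5$, this gives the infinite family claimed in the statement. The main weight of the argument rests on the equality characterization in Lemma~\ref{lem:alpha-Gp}, which we assume. Among the remaining steps, the only real content is the triangle-freeness argument, and the parity and mod-$4$ case analysis handles it cleanly. One could alternatively read off bipartiteness of $G_p[C_i\cup C_j]$ from Lemma~\ref{lem:structure-Gp}, but the direct argument is simpler.
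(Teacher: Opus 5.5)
Your proof is correct. The unique-colorability step is the paper's argument. Each color class is independent, so it has size at most $\alpha(G_p)=4p$. The sizes sum to $12p$, so each class is a maximum independent set, and the equality case of Lemma~\ref{lem:alpha-Gp} makes it a principal class. The real difference is in the clique number. The paper does not argue triangle-freeness directly. It applies the two-divisor clique formula of Ba\v{s}i\'{c}--Ili\'{c} (Theorem~3.1 of \cite{Basic-2009}), checks that $\{2,p\}$ falls under its third case ($d_2=2\nmid d_1=p$), and reads off $\omega(G_p)=\max(f(12),f(6p))=2$. You work directly from the connection set. A difference with $\gcd$ equal to $p$ is odd, and one with $\gcd$ equal to $2$ is $\equiv 2 \pmod 4$. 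The relation $(a-b)+(b-c)=a-c$ then excludes any mix by parity and excludes the all-$2$ case modulo $4$. Your route is self-contained and uses only that $4p$ divides the order, so it shows $\ICG_n(\{2,p\})$ is triangle-free whenever $4p\mid n$. It also avoids having to match the case hypotheses of an external classification. The paper's route is shorter, but all of its content sits in the citation.

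One remark on what you assume. Relying only on the equality characterization of Lemma~\ref{lem:alpha-Gp} is the right choice, because the lemma's auxiliary bound $3p+1$ for independent sets meeting all three classes is false. Consider the vertices $x$ with $(x\bmod 4,\,x\bmod p)\in\{(0,0),(2,0)\}\cup\{(1,w):w\not\equiv 0\}$. Every edge difference $d$ has $(d\bmod 4,\,d\bmod p)$ equal to $(\pm1,0)$ or to $(2,w)$ with $w\not\equiv0$, and no difference inside this set has that form. So these vertices form an independent set of size $3p+3$ meeting every class; for $p=5$ this gives $18>16$. Since $3p+3<4p$ for $p\ge5$, the facts you actually use remain true: $\alpha(G_p)=4p$, with the principal classes as the only maximum independent sets. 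One way to verify them is to write $G_p\cong K_3\otimes H$, where $H$ is the Cayley graph on $\Z_4\times\Z_p$ with connection set $\{(\pm1,0)\}\cup\{(2,w):w\not\equiv0\}$. For $p\ge5$, every nonempty independent set $J$ of $H$ satisfies $|N_H(J)|>2|J|$, and this gives both facts.
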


\begin{proof}
Any proper $3$-coloring partitions $V(G_p)$ into three independent sets. Since $|V(G_p)| = 12p$ and $\alpha(G_p) = 4p$ (Lemma~\ref{lem:alpha-Gp}), each color class must contain exactly $4p$ vertices. By the strict uniqueness condition in Lemma~\ref{lem:alpha-Gp}, the only independent sets of size $4p$ are the principal classes $C_0, C_1$, and $C_2$. Thus, the coloring is unique up to permutation of colors.

To determine the clique number of $G_p$, we apply the general structural theorem for integral circulant graphs with exactly two divisors, established by Ba\v{s}i\'{c} and Ili\'{c} (Theorem 3.1 in \cite{Basic-2009}). 
Applying their result to our graph $\ICG_{12p}(\{2, p\})$, we order the divisors such that $d_1 > d_2$, giving $d_1 = p$ and $d_2 = 2$. 
Since $d_2 = 2 > 1$ and $d_2$ does not divide $d_1$ (as $p \ge 5$ is an odd prime), we evaluate the clique number using the third case of their theorem:
\begin{equation*}
    \omega(G_p) = \max \left( f \left( \frac{12p}{p} \right), f \left( \frac{12p}{2} \right) \right) = \max \left( f(12), f(6p) \right),
\end{equation*}
where $f(x)$ denotes the smallest prime divisor of $x$. 
The smallest prime divisor of $12$ is $2$, and since $p \ge 5$, the smallest prime divisor of $6p$ is also $2$. Thus, $\omega(G_p) = \max(2, 2) = 2$.

Since $\chi(G_p) = 3 \neq 2 = \omega(G_p)$ strictly holds for every prime $p \ge 5$, the existence of this infinite family completely resolves the problem.
\end{proof}

\section{Uniquely colorable Cayley graphs over nonabelian groups}

In \cite{Klotz-2017}, Klotz and Sander posed an open question (Problem 2.4) asking whether there exists a uniquely colorable Cayley graph over a nonabelian group such that different color classes are left cosets of different subgroups. The algebraic motivation for this problem stems directly from Theorem 2.2(3) in their work, which establishes that the subgroups underlying any two color classes must be conjugate. Consequently, in an abelian group, all color classes are trivially forced to be cosets of the identical subgroup. To answer the problem in the affirmative, one must construct a Cayley graph over a nonabelian group where the color class containing the identity element forms a non-normal subgroup.

Rather than providing an isolated counterexample, we establish a general theoretical construction that yields an infinite family of such graphs, thereby completely resolving Problem 2.4 for any nonabelian group containing a non-normal subgroup. 

\begin{theorem}
Let $\Gamma$ be any nonabelian group of order $n$ and $U$ be a non-normal subgroup of $\Gamma$. The undirected Cayley graph $G = \text{Cay}(\Gamma, \Gamma \setminus U)$ is uniquely colorable, and its optimal color classes are left cosets of strictly distinct subgroups. 
\end{theorem}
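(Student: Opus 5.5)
The plan is to identify the graph explicitly as a complete multipartite graph whose parts are the right cosets of $U$, and then read off the color classes. First check that $G$ is a well-defined undirected Cayley graph. Since $e\in U$, we have $e\notin \Gamma\setminus U$. Since $U$ is closed under inverses, $s\notin U$ if and only if $s^{-1}\notin U$, so $S=\Gamma\setminus U$ satisfies $S^{-1}=S$. Also $U\neq\Gamma$, because a non-normal subgroup is proper, so $S\neq\emptyset$.

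Next, by the edge rule of the Preliminaries, $a\sim b$ if and only if $ab^{-1}\notin U$, and $ab^{-1}\in U$ is equivalent to $Ua=Ub$. Hence two vertices are adjacent exactly when they lie in different right cosets of $U$. So $G$ is the complete multipartite graph $K_{m,\dots,m}$ with $k=[\Gamma:U]\ge 2$ parts, the parts being the right cosets $Ug$, each of size $m=|U|$.

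Unique colorability then follows by the standard argument for complete multipartite graphs:
\begin{enumerate}
\item Each part is independent, so $\chi(G)\le k$.
\item Choosing one vertex from each part gives a $k$-clique, so $\chi(G)=\omega(G)=k$.
\item In any proper coloring, two vertices from different parts are adjacent, so every color class lies inside a single part.
\item There are $k$ nonempty color classes covering $k$ parts, each class inside one part, so each part is exactly one color class.
\end{enumerate}
Therefore the unique optimal partition is $\{Ug : g\in\Gamma\}$.

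Finally, rewrite each class as a left coset: $Ug=g(g^{-1}Ug)$. So the class $Ug$ is a left coset of the conjugate subgroup $g^{-1}Ug$, in agreement with Theorem 2.2(3) of \cite{Klotz-2017}. The subgroup underlying a left coset is intrinsic: if $X=aH$, then $H=\{x^{-1}y : x,y\in X\}$, so $H$ is determined by the set $X$ alone. Because $U$ is not normal, there is some $g$ with $g^{-1}Ug\neq U$. The color class $U=eU$ then has underlying subgroup $U$, while $Ug$ has underlying subgroup $g^{-1}Ug\neq U$. These are left cosets of strictly distinct subgroups, as the theorem claims.

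The only point I expect to need care is this last well-definedness step. We must rule out that some other representation of $Ug$ as a left coset $a'H'$ could have $H'=U$. The intrinsic formula $H=X^{-1}X$ settles this. Everything else is direct verification, and no spectral or counting argument like that of Lemma~\ref{lem:alpha-Gp} is needed.
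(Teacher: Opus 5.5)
Your proposal is correct and takes essentially the same approach as the paper: you identify $\mathrm{Cay}(\Gamma,\Gamma\setminus U)$ as the complete multipartite graph whose parts are the right cosets of $U$, rewrite $Ug=g(g^{-1}Ug)$, and use non-normality to find a color class whose underlying subgroup differs from $U$. Your version is slightly tighter than the paper's in three small ways: you prove unique colorability of complete multipartite graphs directly instead of citing it, you skip the unnecessary reduction of $g$ to a transversal representative, and you use $H=X^{-1}X$ to justify that the subgroup underlying a left coset is intrinsic, a point the paper leaves implicit.
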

\begin{proof}
Let $S = \Gamma \setminus U$. Since $U$ is a subgroup, $e \in U$, which guarantees $e \notin S$. Furthermore, $U$ is closed under inversion, meaning $x \in U \iff x^{-1} \in U$. Consequently, $x \in S \iff x^{-1} \in S$, ensuring that $S = S^{-1}$. Thus, $G$ is a well-defined undirected Cayley graph without loops.

By Lagrange's theorem, the order of $U$ strictly divides $n$. Let $k = n/|U|$. We partition the vertex set $\Gamma$ into exactly $k$ distinct right cosets $C_i = U d_i$ for $1 \le i \le k$. Here, the set $\{d_1, d_2, \dots, d_k\}$ represents a complete right transversal of $U$ in $\Gamma$, where we conventionally set $d_1 = e$. 

By the definition of the Cayley graph, two vertices $x, y \in \Gamma$ are non-adjacent if and only if $xy^{-1} \notin S$, which is strictly equivalent to $xy^{-1} \in U$. 
We evaluate adjacency in two structural cases:
\begin{itemize}
    \item \textbf{Intra-coset adjacency:} Let $x, y$ belong to the same right coset $C_i$. We can express them as $x = u_1 d_i$ and $y = u_2 d_i$ for some $u_1, u_2 \in U$. Their evaluation yields $xy^{-1} = (u_1 d_i)(u_2 d_i)^{-1} = u_1 d_i d_i^{-1} u_2^{-1} = u_1 u_2^{-1}$. Since $U$ is a subgroup, $u_1 u_2^{-1} \in U$. Therefore, there are no edges within any right coset, making each $C_i$ an independent set of size $n/k$.
    \item \textbf{Inter-coset adjacency:} Let $x \in C_i$ and $y \in C_j$ for $i \neq j$. Thus, $x = u_1 d_i$ and $y = u_2 d_j$. Suppose, for the sake of contradiction, that they are non-adjacent, meaning $xy^{-1} \in U$. This implies $u_1 d_i d_j^{-1} u_2^{-1} = u_3 \in U$. Rearranging this gives $d_i d_j^{-1} = u_1^{-1} u_3 u_2 \in U$, which fundamentally implies $U d_i = U d_j$. This contradicts the construction that $C_i$ and $C_j$ are distinct cosets. Therefore, $xy^{-1} \notin U$, meaning $xy^{-1} \in S$. Consequently, every vertex in $C_i$ is connected to every vertex in $C_j$.
\end{itemize}

This evaluation proves that $G$ is isomorphic to the complete multipartite graph $K_{n/k, \dots, n/k}$ containing exactly $k$ parts. Complete multipartite graphs are rigidly uniquely colorable \cite{Klotz-2017}, guaranteeing that the right cosets $C_1, \dots, C_k$ constitute the only possible optimal color classes of $G$.

To satisfy the specific topological conditions of Problem 2.4, each color class must be expressed as a \textit{left} coset of some generating subgroup $U(C_i)$. Utilizing basic group theory, any right coset can be rewritten as a left coset of a conjugate subgroup:
$$ C_i = U d_i = d_i (d_i^{-1} U d_i) = d_i U_{d_i}, $$
where $U_{d_i} = d_i^{-1} U d_i$ is guaranteed to be a valid subgroup. Thus, the color class $C_i$ is explicitly a left coset of the subgroup $U(C_i) = d_i^{-1} U d_i$. 

Finally, since $U$ is explicitly chosen to be a non-normal subgroup, there must exist some arbitrary element $g \in \Gamma$ such that $g^{-1}Ug \neq U$. By the established partition, this element can be written as $g = u d_k$ for some $u \in U$ and some transversal representative $d_k$. Evaluating the conjugation yields:
$$ g^{-1} U g = (u d_k)^{-1} U (u d_k) = d_k^{-1} u^{-1} U u d_k. $$
Because $u \in U$, the inner conjugation inherently satisfies $u^{-1} U u = U$. The expression therefore simplifies exactly to $d_k^{-1} U d_k$. Since $g^{-1}Ug \neq U$, it rigorously follows that $d_k^{-1} U d_k \neq U$. 

Consequently, at least two color classes (specifically the identity class $C_1 = U$ and the class $C_k = d_k U_{d_k}$) are left cosets of strictly distinct subgroups, fully resolving the problem.
\end{proof}

The smallest group exhibiting this general topological property is the symmetric group $S_3$. Selecting the non-normal subgroup $U = \{e, (12)\}$ immediately generates the unique complete tripartite graph $K_{2,2,2}$, where the respective left-coset subgroups $U(C_i)$ explicitly evaluate to the three distinct subgroups $\{e, (12)\}$, $\{e, (13)\}$, and $\{e, (23)\}$.

\section{Conclusion and further research}

In this paper, we resolved two open problems regarding uniquely colorable Cayley graphs, demonstrating that the structural constraints imposed by unique colorability are significantly less restrictive than previously hypothesized. Had the original conjectures held true, the class of uniquely colorable Cayley graphs would have been rigidly confined to extreme topological ($\chi = \omega$) and algebraic (conjugate generating subgroups) equivalences. Instead, we established that unique colorability in circulant graphs does not mandate the equality of chromatic and clique numbers, and we proved that nonabelian Cayley graphs can support uniquely generated color classes deriving from strictly distinct, non-normal subgroups. In both cases, the resolutions required moving beyond standard spectral bounds or traditional left-coset analytical methods. We relied on exact structural isomorphisms to compute the true independence number and utilized right-coset evaluations to bypass conjugacy limitations.

While this paper focused exclusively on the rigid constraints of uniquely colorable graphs, a natural progression is to investigate graphs where the chromatic and clique numbers perfectly align not just globally, but across all induced subgraphs. In \cite{klotz07}, Klotz and Sander successfully characterized all perfect unitary Cayley graphs. Since unitary Cayley graphs represent only a specific subset of integral circulant graphs, determining the full classification of perfect integral circulant graphs remains a compelling open problem. Consequently, utilizing the exact combinatorial methods and topological projections developed in this paper to establish structural bounds for perfectness within the broader class of integral circulant graphs presents a highly promising direction for future research.

\section*{Acknowledgment}

This research was supported by the research project of the Ministry of Education, Science and Technological Development of the Republic of Serbia (No.~451-03-34/2026-03/200124).



\footnotesize

 \begin{thebibliography}{00}















\bibitem{ahmadi03} A. Ahmadi, R. Belk, C. Tamon, C. Wendler, On mixing of continuous-time quantum walks on some circulant graphs, {\it Quantum Inf. Comput.} {\bf 3} (2003) 611--618.

\bibitem{Ba14} M. Ba\v{s}i\'{c}, Which weighted circulant networks have perfect state transfer?, {\it Inform. Sci.} {\bf 257} (2014) 193--209.

\bibitem{Basic-2024} M. Ba\v{s}i\'{c}, Minimal spread of integral circulant graphs, {\it Discret. Appl. Math.} {\bf 342} (2024) 317--333.

\bibitem{Basic-2025} M. Ba\v{s}i\'{c}, The least eigenvalues of integral circulant graphs, {\it J. Algebraic Combin.} {\bf 61} (2025).

\bibitem{Basic-2009} M. Ba\v{s}i\'{c}, A. Ili\'{c}, On the clique number of integral circulant graphs, {\it Appl. Math. Lett.} {\bf 22} (2009) 1406--1411.

\bibitem{BaIlSt23} M. Ba\v{s}i\'{c}, A. Ili\'{c}, A. Stamenkovi\'{c}, Maximal diameter of integral circulant graphs, {\it Inform. Comput.} {\bf 298} (2024) 105208.

\bibitem{Basic-LAA-2025} M. Ba\v{s}i\'{c}, Graphs with three and four distinct eigenvalues based on circulants, {\it Linear Algebra Appl.} {\bf 718} (2025) 30--57.

\bibitem{Bhakta-2025} K. Bhakta, B. Bhattacharjya, Periodicity and perfect state transfer of Grover walks on quadratic unitary Cayley graphs, {\it Quantum Inf. Process.} {\bf 24} (2025) 260.

\bibitem{Bhakta-2026} K. Bhakta, B. Bhattacharjya, State transfer in Grover walks on unitary and quadratic unitary Cayley graphs over finite commutative rings, {\it Discrete Math.} {\bf 349} (2026) 115151.

\bibitem{Davis70} P. J. Davis, {\it Circulant Matrices}, Wiley, New York, 1970.

\bibitem{Godsil17} G. Godsil, H. Zhan, Uniform mixing on Cayley graphs, {\it Electron. J. Comb.} {\bf 24} (2017) \#P3.20.

\bibitem{HardyWright} G. H. Hardy, E. M. Wright, {\it An introduction to the Theory of Numbers}, 5th ed., Clarendon Press, Oxford University Press, New York, 1979.

\bibitem{Hwang03} F. K. Hwang, A survey on multi-loop networks, {\it Theor. Comput. Sci.} {\bf 299} (2003) 107--121.

\bibitem{Basic-2010} A. Ili\'{c}, M. Ba\v{s}i\'{c}, On the chromatic number of integral circulant graphs, {\it Comput. Math. Appl.} {\bf 60} (2010) 144--150.

\bibitem{klotz07} W. Klotz, T. Sander, Some properties of unitary Cayley graphs, {\it Electron. J. Comb.} {\bf 14} (2007) \#R45.

\bibitem{Klotz-2017} W. Klotz, T. Sander, Uniquely colorable Cayley graphs, {\it Ars Math. Contemp.} {\bf 12} (2017) 155--165.

\bibitem{Kubota-2025} S. Kubota, K. Yoshino, Circulant graphs with valency up to 4 that admit perfect state transfer in Grover walks, {\it J. Combin. Theory Ser. A} {\bf 216} (2025) 106064.

\bibitem{Moenius-2023} K. M\"{o}nius, W. So, How many non-isospectral integral circulant graphs are there?, {\it Australas. J. Combin.} {\bf 86} (2023) 320--335.

\bibitem{Sander2021} J. W. Sander, The geometric kernel of integral circulant graphs, {\it Electron. J. Comb.} {\bf 28} (2021) \#P3.33.

\bibitem{SanderSander2023} J. W. Sander, T. Sander, Characterisation of all integral circulant graphs with multiplicative divisor sets and few eigenvalues, {\it J. Algebr. Comb.} {\bf 58} (2023) 993--1017.

\bibitem{Saxena-2007} N. Saxena, S. Severini, I. Shparlinski, Parameters of integral circulant graphs and periodic quantum dynamics, {\it Int. J. Quantum Inf.} {\bf 5} (2007) 417--430.

\bibitem{wasin} W. So, Integral circulant graphs, {\it Discrete Math.} {\bf 306} (2006) 153--158.






\end{thebibliography}
\end{document}